\newtheorem{theorem}{Theorem}[section]
\newtheorem{thm}{Theorem}[section]
\newtheorem{lemma}{Lemma}[section]
\theoremstyle{definition}
\theoremstyle{remark}
\numberwithin{equation}{section}
\DeclareMathOperator{\tr}{Trace}
\DeclareMathOperator{\dv}{Div}
\DeclareMathOperator{\Hess}{Hess}
\begin{document}

\title{A new pinching theorem for complete self-shrinkers and its generalization}
\thanks{Research supported by the National Natural Science Foundation of China, Grant Nos. 11531012, 11371315, 11601478; and
the China Postdoctoral Science Foundation, Grant No. 2016M590530.}
\author{Li Lei}
\address{Center of Mathematical Sciences \\ Zhejiang University \\ Hangzhou 310027 \\ China}
\email{lei-li@zju.edu.cn}

\author{Hongwei Xu}
\address{Center of Mathematical Sciences \\ Zhejiang University \\ Hangzhou 310027 \\China}
\email{xuhw@zju.edu.cn}

\author{Zhiyuan Xu}
\address{Center of Mathematical Sciences \\ Zhejiang University \\ Hangzhou 310027 \\ China}
\email{srxwing@zju.edu.cn}

\subjclass[2000]{54C24; 53C40}

\date{}

\keywords{Rigidity theorem, the second fundamental form,
self-shrinkers, $\lambda$-hypersurfaces}

\begin{abstract}
In this paper, we firstly verify that if $M$ is a
complete self-shrinker with polynomial volume growth in
$\mathbb{R}^{n+1}$, and
if the squared norm of the second fundamental form of $M$
satisfies $0\leq|A|^2-1\leq\frac{1}{18}$, then $|A|^2\equiv1$ and
$M$ is a round sphere or a cylinder. More generally, let $M$ be a complete $\lambda$-hypersurface with polynomial volume growth in $\mathbb{R}^{n+1}$ with $\lambda\neq0$. Then we prove that there exists an positive constant $\gamma$, such that if $|\lambda|\leq\gamma$ and the squared norm of the second fundamental form of $M$ satisfies
$0\leq|A|^2-\beta_\lambda\leq\frac{1}{18}$, then $|A|^2\equiv \beta_\lambda$, $\lambda>0$ and $M$ is a
cylinder. Here $\beta_\lambda=\frac{1}{2}(2+\lambda^2+|\lambda|\sqrt{\lambda^2+4})$.
\end{abstract}

\maketitle

\section{Introduction}

Suppose $X: M \rightarrow\mathbb{R}^{n+1}$ is an isometric
immersion. If the position vector $X$ evolves in the direction of
the mean curvature vector $\vec{H}$, this yields a
solution of mean curvature flow:
$$\left\{\begin{array}{llll} \frac{\partial}{\partial t}X(x, t)=\vec{H}(x, t),\,\,
x\in M,\\
X(x, 0)=X(x).
\end{array} \right.$$
An important class of solutions to the above mean curvature flow equations are
self-shrinkers \cite{Hu2}, which satisfy
\begin{equation}\label{selfshr}
H=-X^N,
\end{equation}
where $X^N$ is the projection of $X$ on the unit  normal vector
$\xi$, i.e., $X^N=\langle X,\xi\rangle$. We remark that some authors have a factor $\frac{1}{2}$ on the right-hand side of the defining equation for self-shrinkers.

Rigidity problems of self-shrinkers have been studied extensively.
As is known, there are close relations between self-shrinkers and
minimal submanifolds. But they are quite different on many aspects.
We refer the readers to \cite{GXXZ} for the rigidity problems of
minimal submanifolds. In \cite{AL}, Abresch--Langer classified all
smooth closed self-shrinker curves in $\mathbb{R}^2$. In 1990,
Huisken \cite{Hu2} proved that the only smooth closed self-shrinkers
with nonnegative mean curvature in $\mathbb{R}^{n+1}$ are round
spheres for $n\geq2$. Based on the work due to Huisken
\cite{Hu2,Hu3}, Colding--Minicozzi \cite{CM} proved that if $M$ is
an $n$-dimensional complete self-shrinker with nonnegative mean
curvature and polynomial volume growth in $\mathbb{R}^{n+1}$, then
$M$ is isometric to either the hyperplane $\mathbb{R}^n$, a round
sphere or a cylinder. In \cite{Bre}, Brendle verified that the round
sphere is the only compact embedded self-shrinker in $\mathbb{R}^3$
of genus zero.

In 2011, Le--Sesum \cite{LeSe} proved that any $n$-dimensional
complete self-shrinker with polynomial volume growth in
$\mathbb{R}^{n+1}$ whose squared norm of the second fundamental form
satisfies $|A|^2<1$ must be a hyperplane. Afterwards, Cao--Li
\cite{CaoLi} generalized this rigidity result to arbitrary
codimension and proved that if $M$ is an $n$-dimensional
complete self-shrinker with polynomial volume growth in
$\mathbb{R}^{n+q}$, and if $|A|^2\leq1$, then $M$ must be one of the
generalized cylinders. In 2014, Ding--Xin \cite{DX} proved the
following rigidity theorem for self-shrinkers in the Euclidean
space.
\begin{theorem}\label{thm1}
Let $M$ be an $n$-dimensional complete self-shrinker with polynomial
volume growth in $\mathbb{R}^{n+1}$. If the squared norm of the
second fundamental form satisfies $0\leq|A|^2-1\leq\frac{11}{500}$,
then $|A|^2\equiv1$ and $M$ is a round sphere or a cylinder.
\end{theorem}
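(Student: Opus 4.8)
The plan is to run the drift-Laplacian machinery of Colding--Minicozzi, sharpened by a careful treatment of the gradient terms. Write $\rho = e^{-|X|^2/2}$ for the Gaussian weight and $\mathcal{L} = \Delta - \langle X, \nabla\,\cdot\,\rangle$ for the associated drift Laplacian, which is self-adjoint with respect to $\rho\,dV$. Since the pinching hypothesis forces $|A|^2$ to be bounded and $M$ has polynomial volume growth, all quantities built from $A$ and its first two derivatives are integrable against $\rho\,dV$ and the boundary terms at infinity vanish; hence $\int_M \mathcal{L} u\,\rho\,dV = 0$ and the integration-by-parts formula $\int_M u\,\mathcal{L} v\,\rho\,dV = -\int_M \langle\nabla u,\nabla v\rangle\,\rho\,dV$ are both available for the relevant $u,v$. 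First I would record the Simons-type identity for self-shrinkers, $\frac12\mathcal{L}|A|^2 = |\nabla A|^2 + |A|^2(1-|A|^2)$, equivalently $\frac12\mathcal{L}g = |\nabla A|^2 - |A|^2 g$ for $g := |A|^2 - 1 \ge 0$.

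Integrating this identity gives the first integral relation $\int_M |\nabla A|^2\,\rho\,dV = \int_M |A|^2 g\,\rho\,dV$. Testing the same identity against $g$ and integrating by parts yields a second relation $\int_M |A|^2 g^2\,\rho\,dV = \int_M g\,|\nabla A|^2\,\rho\,dV + \tfrac12\int_M |\nabla g|^2\,\rho\,dV$, where $|\nabla g|^2 = |\nabla|A|^2|^2 = 4|A|^2|\nabla|A||^2$. Since $|A|^2 \ge 1$, the left-hand side dominates $\int_M g^2\,\rho\,dV$, and the pinching bound $g \le \tfrac{11}{500}$ makes $\int_M g\,|\nabla A|^2\,\rho\,dV$ small; but the term carrying $|\nabla|A||^2$ is of exactly the same size as $\int_M g^2\,\rho\,dV$ and is not obviously absorbed.

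The main obstacle, and the step that pins down the admissible constant, is precisely this gradient estimate. The difficulty is structural: the first relation controls the gradient energy $\int_M|\nabla A|^2\,\rho\,dV$ only \emph{linearly} in $g$, whereas closing the argument requires \emph{quadratic} control of order $g^2$, and the quadratic pieces of the two relations nearly cancel. To defeat this near-cancellation I would combine two ingredients: (a) the refined Kato-type inequality for the totally symmetric tensor $\nabla A$ of a hypersurface (the Codazzi equation makes $A_{ijk}$ symmetric in all indices), of the form $|\nabla A|^2 \ge (1+c_n)|\nabla|A||^2$ with an explicit $c_n>0$, which shrinks the $|\nabla|A||^2$ term; and (b) a higher-order input — either the Bochner/Simons identity for $|\nabla A|^2$ under $\mathcal{L}$, or testing the Simons identity against the higher weight $|A|^2 g$ — to upgrade the gradient control to order $g^2$. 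Using the pointwise consequences of the pinching, $|A| - 1 \le \tfrac12 g$ and $|A|^2 \le 1 + \tfrac{11}{500}$, and optimizing the resulting constants should produce an inequality $\int_M g^2\,\rho\,dV \le \theta\int_M g^2\,\rho\,dV$ with $\theta < 1$ exactly when the pinching constant lies below the stated threshold; this forces $\int_M g^2\,\rho\,dV = 0$, i.e. $|A|^2 \equiv 1$. The delicate optimization of $\theta$ is where the precise numerical constant is generated.

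Finally, once $|A|^2 \equiv 1$ the first integral relation gives $\int_M |\nabla A|^2\,\rho\,dV = 0$, so $\nabla A \equiv 0$ and $M$ has parallel second fundamental form. A complete hypersurface of $\mathbb{R}^{n+1}$ with parallel second fundamental form is, by the classification of such hypersurfaces, an open piece of a sphere, a hyperplane, or a generalized cylinder $S^k(r)\times\mathbb{R}^{n-k}$; imposing the self-shrinker equation fixes the radius to $r = \sqrt{k}$, while the hyperplane is excluded since it has $|A|^2 = 0 \ne 1$. Thus $M$ is the round sphere $S^n(\sqrt n)$ or a cylinder $S^k(\sqrt k)\times\mathbb{R}^{n-k}$, which is the desired conclusion.
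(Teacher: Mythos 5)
You have correctly reproduced the first-order framework that the paper also uses: the drift Laplacian $\mathcal{L}$, self-adjointness against $\rho\,d\mu$, the Simons-type identity $\tfrac12\mathcal{L}S=|\nabla A|^2+S(1-S)$ (the paper's \eqref{1.3}), and the two integral relations obtained by integrating it and by testing it against $g=S-1$ (equivalent to \eqref{1.4} and \eqref{2.9}). Your closing step via parallel second fundamental form and Lawson's classification is also the right endgame, and your diagnosis of the obstacle --- that the first relation controls $\int|\nabla A|^2\rho$ only linearly in $g$ while closing the loop needs quadratic control, with near-cancellation of the quadratic pieces --- is exactly on target. (Note also that Theorem \ref{thm1} is Ding--Xin's result, quoted in the paper; the paper's own argument proves the stronger Theorem \ref{mthm1} with constant $\tfrac1{18}>\tfrac{11}{500}$, so that proof is the relevant comparison.)

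However, at precisely the point you identify as the crux, your proposal substitutes a wish for a mechanism, and ingredient (a) is not available. The refined Kato inequality $|\nabla A|^2\ge(1+c_n)|\nabla|A||^2$ with explicit $c_n>0$ is known for Codazzi tensors with \emph{constant trace}; for a self-shrinker $\operatorname{Trace}A=H$ is non-constant, so total symmetry of $h_{ijk}$ alone does not yield any explicit improvement of the Kato constant, and you give no proof of one. Ingredient (b) gestures at the correct identity --- the second-order Simons formula \eqref{1.5} for $\tfrac12\mathcal{L}|\nabla A|^2$ --- but the actual work lives in handling the cubic terms $B_1-2B_2$ that this identity produces, which your sketch never names. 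The paper's proof runs through: the integral identity \eqref{1.8} expressing $\int(B_1-2B_2)\rho$ through $G=2(Sf_4-f_3^2)$ and $|\nabla S|^2$; the symmetrization inequality $|\nabla^2A|^2\ge\tfrac34 G$ \eqref{1.10}; the pointwise algebraic estimate $3(B_1-2B_2)\le(S+C_1G^{1/3})|\nabla A|^2$ with the explicit constant $C_1$ (Lemma 4.2 of Ding--Xin, used in \eqref{2.3}); and an estimate of $\int|\nabla A|^3\rho$ via $\mathcal{L}S$ and Young's inequality \eqref{2.5}. Only after combining all of these does a three-parameter optimization (in $\sigma$, $\epsilon$, $\kappa$) produce negative coefficients and hence the numerical threshold; your statement that ``optimizing the resulting constants should produce $\theta<1$ exactly when the pinching constant lies below the stated threshold'' assumes the conclusion rather than deriving it --- the admissible constant ($\tfrac{11}{500}$, or $\tfrac1{18}$ in this paper) is an output of those explicit estimates, not an input. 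As written, the proposal is a correct setup plus an accurate description of the difficulty, but the step that actually defeats the near-cancellation is missing.
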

In \cite{ChengWei2}, Cheng--Wei proved that if $M$ is an
$n$-dimensional complete self-shrinker with polynomial volume growth
in $\mathbb{R}^{n+1}$, and if $0\leq|A|^2-1\leq\frac{3}{7}$, where
$|A|$ is constant, then $|A|^2=1$.

Recently, Xu--Xu \cite{XX} improve Theorem \ref{thm1} and proved the
following rigidity theorem
\begin{theorem}\label{thm2}
Let $M$ be an $n$-dimensional complete self-shrinker with polynomial volume growth in $\mathbb{R}^{n+1}$.
If the squared norm of the second fundamental form satisfies
$0\leq|A|^2-1\leq\frac{1}{21}$, then $|A|^2\equiv1$ and $M$ is a round sphere or a cylinder.
\end{theorem}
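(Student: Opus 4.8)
The plan is to run a weighted Bochner argument built on Colding--Minicozzi's drift Laplacian $\mathcal{L} = \Delta - \langle X, \nabla\,\cdot\,\rangle$, which is self-adjoint with respect to the Gaussian weight $e^{-|X|^2/2}$; under the polynomial volume growth hypothesis the weighted measure is finite and the integration by parts $\int_M (\mathcal{L}u)\,v\, e^{-|X|^2/2}\,dV = -\int_M \langle \nabla u, \nabla v\rangle\, e^{-|X|^2/2}\,dV$ is justified (via cutoff functions and letting the radius tend to infinity) for the bounded tensors appearing below. The computational backbone is the self-shrinker Simons identity
\[
\tfrac12\mathcal{L}|A|^2 = |\nabla A|^2 + |A|^2\bigl(1 - |A|^2\bigr),
\]
where the cubic term $H\tr(A^3)$ cancels against the Hessian term produced by the shrinker relation $\nabla H = A(X^\top)$.

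First I would set $f = |A|^2 - 1\ge 0$ and observe that testing this identity against any nonnegative power $f^p$ merely reproduces the tautology
\[
\int_M f^{p+1}|A|^2\, e^{-|X|^2/2} = \int_M f^{p}|\nabla A|^2\, e^{-|X|^2/2} + \tfrac{p}{2}\int_M f^{p-1}|\nabla f|^2\, e^{-|X|^2/2},
\]
in which every term is nonnegative. This is exactly why the regime $|A|^2\ge 1$ is harder than the Cao--Li regime $|A|^2\le1$, where the Simons identity is already a sum of nonnegative terms: first-order information cannot by itself force $f\equiv 0$, so the real content has to come from a second-order estimate.

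The key step, and the main obstacle, is to produce an \emph{upper} bound for $\int_M f^{p}|\nabla A|^2$. For this I would differentiate once more, deriving the Bochner identity for $\mathcal{L}|\nabla A|^2$ from the tensorial Simons equation $\mathcal{L}A = (1-|A|^2)A$, schematically
\[
\tfrac12\mathcal{L}|\nabla A|^2 = |\nabla^2 A|^2 + (1-|A|^2)|\nabla A|^2 + (\text{quartic and } \nabla|A|^2 \text{ terms}),
\]
and then invoke the refined Kato inequalities for the Codazzi tensors $A$ and $\nabla A$, which bound $|\nabla A|^2$ below by a multiple of $|\nabla|A||^2$ and $|\nabla^2 A|^2$ below by a multiple of $|\nabla|\nabla A||^2$. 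Testing this second identity against $f^p$, integrating by parts against the Gaussian weight, and using the pinching $0\le f\le\tfrac1{21}$ to bound $|A|^2\le 1+\tfrac1{21}$ in the zeroth-order coefficients, I expect to combine everything with the tautology above to arrive at an inequality of the shape $c(n,p,\delta)\int_M f^{p+1}\, e^{-|X|^2/2}\le 0$ with $c>0$, where $\delta$ denotes the pinching bound. The delicate part is a Cauchy--Schwarz/Young balancing of the gradient cross terms: the exponent $p$ and the Young weights are free, and the admissible range of $\delta$ is precisely what keeps $c(n,p,\delta)>0$ after optimizing in $p$ and taking the worst case over the dimension $n$. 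Pushing the constant up to $\tfrac1{21}$ (from Ding--Xin's $\tfrac{11}{500}$) amounts to extracting the sharpest Kato constant and the most economical absorption of these cross terms.

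Once $c\int_M f^{p+1}\,e^{-|X|^2/2}\le0$ with $c>0$ is established, it forces $f\equiv 0$, i.e. $|A|^2\equiv1$; substituting back into the Simons identity gives $|\nabla A|^2\equiv 0$, so $M$ is a complete self-shrinker with parallel second fundamental form and $|A|^2=1$. The classification of complete hypersurfaces of $\mathbb{R}^{n+1}$ with $\nabla A\equiv0$ (products of spheres and Euclidean factors) intersected with the self-shrinker normalization then leaves only the round sphere $S^n(\sqrt n)$ and the cylinders $S^{m}(\sqrt m)\times\mathbb{R}^{n-m}$, both of which indeed have $|A|^2\equiv1$; the flat $\mathbb{R}^n$ is excluded since $|A|^2=0$. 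This is the desired conclusion.
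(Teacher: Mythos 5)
Your framework (drift Laplacian, Gaussian weight, finiteness from polynomial volume growth) and your diagnosis that first-order information is tautological are both correct, and they match the setup of the paper, which proves this statement in strengthened form (pinching constant $\frac{1}{18}$ instead of $\frac{1}{21}$) in Section 2. Your outline also correctly reaches the second-order Bochner identity, which in the paper reads $|\nabla^2 A|^2=\frac{1}{2}\mathcal{L}|\nabla A|^2+(|A|^2-2)|\nabla A|^2+3(B_1-2B_2)+\frac{3}{2}|\nabla S|^2$. But at exactly this point your proposal has a genuine gap: the terms you dismiss as ``quartic and $\nabla|A|^2$ terms'' are the crux of the whole theorem, and the mechanism you propose to handle them --- refined Kato inequalities --- cannot work. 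The problematic term is $3(B_1-2B_2)$ with $B_1=\sum h_{ijk}h_{ijl}h_{km}h_{ml}$ and $B_2=\sum h_{ijk}h_{klm}h_{im}h_{jl}$: it is quadratic in $\nabla A$ with no derivative of $|\nabla A|$, so a Kato-type inequality (which compares $|\nabla^2 A|^2$ with $|\nabla|\nabla A||^2$) is simply irrelevant to it. Pointwise it can be of size comparable to $3S|\nabla A|^2\approx 3|\nabla A|^2$ in the pinched regime, while the only favorable term available is $(2-S)|\nabla A|^2\approx|\nabla A|^2$; a direct Cauchy--Schwarz/Young absorption therefore fails by roughly a factor of three, no matter how you tune $p$ and the Young weights, and no positivity constant $c(n,p,\delta)$ emerges.

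What actually closes the argument in the paper is a trio of ingredients absent from your sketch: (a) the lower bound $|\nabla^2 A|^2\geq\frac{3}{4}G$ with $G=2(Sf_4-f_3^2)$, which comes from the \emph{antisymmetric} part of $h_{ijkl}$ via the Ricci identity, $h_{ijij}-h_{jiji}=\mu_i\mu_j(\mu_i-\mu_j)$ --- a curvature-commutator phenomenon, not a Kato inequality (the symmetrized part is discarded, which is the opposite of a Kato refinement); (b) the integral identity $\int_M(B_1-2B_2)\rho\,d\mu=\int_M\bigl(\frac{1}{2}G-\frac{1}{4}|\nabla S|^2\bigr)\rho\,d\mu$, which converts the bad cubic terms into $G$ so that (a) can bite; and (c) the hard pointwise estimate $3(B_1-2B_2)\leq(S+C_1G^{1/3})|\nabla A|^2$ (Lemma 4.2 of Ding--Xin), after which $\int_M|\nabla A|^3\rho\,d\mu$ is controlled by writing $|\nabla A|^2=\frac{1}{2}\mathcal{L}S+S^2-S$ and integrating by parts. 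The pinching constant then comes out of a numerical optimization of the Young parameters (the paper takes $\sigma=0.616$, $\epsilon=0.0577$, $\kappa=0.0434$), not from ``the sharpest Kato constant''; indeed the final inequality is linear in $\int_M(S-1)|\nabla A|^2\rho\,d\mu$ and $\int_M|\nabla A|^2\rho\,d\mu$, and no testing against powers $f^p$ occurs. Your closing step (from $|\nabla A|\equiv0$ and $S\equiv1$ to the sphere and cylinders) is fine, but without (a)--(c) the central inequality is not established, so the proposal as it stands does not prove the theorem.
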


In this paper, we firstly prove the following rigidity theorem for
self-shrinkers in the Euclidean space.
\begin{thm}\label{mthm1}
Let $M$ be an $n$-dimensional complete self-shrinker with polynomial volume growth in $\mathbb{R}^{n+1}$.
If the squared norm of the second fundamental form satisfies
$0\leq|A|^2-1\leq\frac{1}{18}$, then $|A|^2\equiv1$ and $M$ is
one of the following cases:\\
(i) the round sphere $\mathbb{S}^n(\sqrt{n})$;\\
(ii) the cylinder $\mathbb{S}^k(\sqrt{k})\times\mathbb{R}^{n-k},\,\, 1\leq k\leq n-1$.
\end{thm}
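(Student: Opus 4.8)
The plan is to combine the self-shrinker Simons-type identity with weighted integral estimates in the framework of the Colding--Minicozzi drift Laplacian, and to extract the pinching constant $\tfrac{1}{18}$ from a sharp algebraic inequality.

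First I would set up the analytic framework. Let $\mathcal{L}=\Delta-\langle X,\nabla\,\cdot\,\rangle$ be the drift Laplacian, which is self-adjoint with respect to the weighted measure $d\mu=e^{-\abs{X}^2/2}\,dV$. Because $M$ has polynomial volume growth and $\abs{A}^2$ is bounded (by the hypothesis $\abs{A}^2\le 1+\tfrac{1}{18}$), the Gaussian weight decays fast enough that all integrals below converge and integration by parts is justified via the Colding--Minicozzi cutoff argument. The starting point is the self-shrinker Simons identity $\mathcal{L}h_{ij}=(1-\abs{A}^2)h_{ij}$, which yields
$$\tfrac12\mathcal{L}\abs{A}^2=\abs{\nabla A}^2-\abs{A}^2(\abs{A}^2-1).$$
Write $g=\abs{A}^2-1$, so $0\le g\le\tfrac1{18}$ by hypothesis, and the goal is to prove $g\equiv0$.

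Next I would derive two basic integral identities. Integrating the Simons identity against $d\mu$ (test function $1$) gives $\int_M\abs{\nabla A}^2\,d\mu=\int_M\abs{A}^2 g\,d\mu$. Multiplying the Simons identity by $g$, integrating, and using self-adjointness to move one derivative gives $\int_M\abs{A}^2 g^2\,d\mu=\int_M g\abs{\nabla A}^2\,d\mu+\tfrac12\int_M\abs{\nabla g}^2\,d\mu$, where $\nabla g=\nabla\abs{A}^2$. These relate the zeroth, first, and second moments of $g$ to the gradient terms.

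The heart of the argument --- and the place where the constant $\tfrac1{18}$ is won --- is a sharp estimate for $\int_M g\abs{\nabla A}^2\,d\mu$. For this I would compute the second-order Bochner/Simons formula $\tfrac12\mathcal{L}\abs{\nabla A}^2=\abs{\nabla^2A}^2+(\text{lower order in }\abs{A}^2,\abs{\nabla A}^2)$, integrate it against $d\mu$, and invoke two refinements: the Codazzi equation, which makes $\nabla A$ a totally symmetric $3$-tensor and hence yields the refined Kato inequality $\abs{\nabla A}^2\ge\frac{3}{n+2}\abs{\nabla H}^2$ together with $\abs{\nabla g}^2\le 4\abs{A}^2\abs{\nabla A}^2$; and the pinching bound $g\le\tfrac1{18}$, used to absorb the cubic and quartic terms in $g$ into the leading positive terms. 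Combining this with the two identities above should produce an inequality of the schematic form $c(\,g_{\max}\,)\int_M g\,(\text{nonnegative})\,d\mu\le0$ with a coefficient $c$ that stays strictly positive precisely as long as $g_{\max}\le\tfrac1{18}$. I expect this optimization --- tracking all coefficients through the Kato and Cauchy--Schwarz steps and verifying that the threshold is exactly $\tfrac1{18}$ rather than the earlier $\tfrac{1}{21}$ of Theorem \ref{thm2} --- to be the main obstacle.

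Finally, once the integral inequality forces every term to vanish, I would read off $\nabla A\equiv0$ and $g\equiv0$, i.e. $\abs{A}^2\equiv1$ with parallel second fundamental form. The standard classification of complete hypersurfaces in $\mathbb{R}^{n+1}$ with parallel second fundamental form then shows that $M$ is an open piece of a sphere, a cylinder $\mathbb{S}^k(r)\times\mathbb{R}^{n-k}$, or a hyperplane; the self-shrinker equation $H=-\langle X,\xi\rangle$ together with completeness and $\abs{A}^2\equiv1$ rules out the hyperplane and fixes the radii, leaving exactly $\mathbb{S}^n(\sqrt n)$ and $\mathbb{S}^k(\sqrt k)\times\mathbb{R}^{n-k}$, $1\le k\le n-1$.
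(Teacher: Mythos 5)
Your setup (drift Laplacian, weighted integration by parts under polynomial volume growth, the identity $\tfrac12\mathcal{L}|A|^2=|\nabla A|^2-|A|^2(|A|^2-1)$ and the two first-order integral identities derived from it) coincides with the paper's, and your closing classification step is also the paper's. But the ``heart of the argument'' in your plan---the sharp second-order estimate---is exactly where the proposal stops being a proof, and the tools you name there are not the ones that can deliver $\tfrac1{18}$. When one integrates $\tfrac12\mathcal{L}|\nabla A|^2$, the lower-order terms are not absorbable by Kato-type inequalities: the relevant identity is
\[
|\nabla^2 A|^2=\tfrac12\mathcal{L}|\nabla A|^2+(|A|^2-2)|\nabla A|^2+3(B_1-2B_2)+\tfrac32|\nabla S|^2,
\]
with $B_1=\sum h_{ijk}h_{ijl}h_{km}h_{ml}$ and $B_2=\sum h_{ijk}h_{klm}h_{im}h_{jl}$, and everything hinges on controlling the cubic curvature term $B_1-2B_2$, which your sketch never mentions. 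The refined Kato inequality $|\nabla A|^2\ge\frac{3}{n+2}|\nabla H|^2$ plays no role in this problem---$\nabla H$ never enters the paper's argument---and the crude bounds you list ($|\nabla g|^2\le 4|A|^2|\nabla A|^2$ together with $3(B_1-2B_2)\lesssim |A|^2|\nabla A|^2$) are essentially what produced the older, weaker thresholds ($\tfrac{11}{500}$ in Ding--Xin, $\tfrac1{21}$ in Xu--Xu); they cannot reach $\tfrac1{18}$.

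The paper's actual mechanism consists of three ingredients you would have to supply. First, the lower bound $|\nabla^2 A|^2\ge\frac34 G$, where $G=\sum_{i,j}\mu_i^2\mu_j^2(\mu_i-\mu_j)^2=2(Sf_4-f_3^2)$, obtained by comparing $\nabla^2A$ with its full symmetrization and evaluating the antisymmetric part via the Ricci identity $h_{ijij}-h_{jiji}=\mu_i\mu_j(\mu_i-\mu_j)$; this is paired with the weighted identity $\int_M(B_1-2B_2)\rho\,d\mu=\int_M(\tfrac12G-\tfrac14|\nabla S|^2)\rho\,d\mu$, which converts it into a lower bound for $\int(B_1-2B_2)\rho\,d\mu$. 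Second, the pointwise estimate $3(B_1-2B_2)\le(S+C_1G^{1/3})|\nabla A|^2$ (Lemma 4.2 of Ding--Xin) with the explicit constant $C_1=\frac{2\sqrt6+3}{\sqrt[3]{21\sqrt6+103/2}}$, split by Young's inequality into a $G$-term and a $|\nabla A|^3$-term; the latter is then handled through $\int_M|\nabla A|^3\rho\,d\mu=\int_M(\tfrac12\mathcal{L}S-S+S^2)|\nabla A|\rho\,d\mu$ and a further Cauchy--Schwarz against $|\nabla^2A|^2$. Third, a numerical optimization over three free parameters ($\sigma=0.616$, $\epsilon=0.0577$, $\kappa=0.0434$) yielding a final inequality of the form $0\le\int_M[\alpha(S-1)+\beta]|\nabla A|^2\rho\,d\mu$ with $\alpha<0$ and $\beta<-0.452+8.03\,\delta<0$ when $\delta=\tfrac1{18}$. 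In particular, $\tfrac1{18}$ is not a sharp algebraic threshold that your schematic coefficient $c(g_{\max})$ would reveal; it is the output of this specific parameter choice, and without the symmetrization inequality, the $G^{1/3}$ lemma, and the $|\nabla A|^3$ integral trick, no bookkeeping of constants in your scheme closes the gap between $\tfrac1{21}$ and $\tfrac1{18}$.
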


More generally, we consider the rigidity of $\lambda$-hypersurfaces.
The concept of $\lambda$-hypersurfaces was introduced independently
by Cheng--Wei \cite{ChengWei1} via the weighted volume-preserving
mean curvature flow and McGonagle--Ross \cite{McRo} via
isoperimetric type problem in a Gaussian weighted Euclidean space.
Precisely, the hypersurfaces of Euclidean space satisfying the
following equation are called $\lambda$-hypersurfaces:
\begin{equation}\label{lamhyp}
H=-X^N+\lambda,
\end{equation}
where $X^N$ is the projection of $X$ on the unit  normal vector
$\xi$ and $\lambda$ is a constant. In recent years, the rigidity of
$\lambda$-hypersurfaces has been investigated by several authors
\cite{COW,ChengWei1,Guang,XLX}. In \cite{Guang}, Guang showed that
if $M$ is a $\lambda$-hypersurface with polynomial volume growth in
$\mathbb{R}^{n+1}$, and if $|A|^2\leq\alpha_\lambda$, then $M$ must
be one of the generalized cylinders, where
$\alpha_\lambda=\frac{1}{2}(2+\lambda^2-|\lambda|\sqrt{\lambda^2+4})$.
In the second part of this paper, we prove the following second
pinching theorem for $\lambda$-hypersurfaces in the Euclidean space.
\begin{thm}\label{mthm2}
Let $M$ be an $n$-dimensional complete $\lambda$-hypersurface with polynomial volume growth in $\mathbb{R}^{n+1}$ with $\lambda\neq0$. There exists an positive constant $\gamma$, such that if $|\lambda|\leq\gamma$ and the squared norm of the second fundamental form satisfies
$0\leq|A|^2-\beta_\lambda\leq\frac{1}{18}$, then $|A|^2\equiv \beta_\lambda$, $\lambda>0$ and $M$ must be
the cylinder $\mathbb{S}\Big(\frac{\sqrt{\lambda^2+4}-|\lambda|}{2}\Big)\times\mathbb{R}^{n-1}$. Here $\beta_\lambda=\frac{1}{2}(2+\lambda^2+|\lambda|\sqrt{\lambda^2+4})$.
\end{thm}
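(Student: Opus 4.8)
The plan is to run the weighted integral-estimate scheme that underlies Theorem~\ref{mthm1} (the case $\lambda=0$), now treating the term linear in $\lambda$ as a perturbation to be absorbed once $|\lambda|$ is small; the threshold $\gamma$ will be exactly the size below which every $\lambda$-dependent correction stays inside the margin left by the $\lambda=0$ computation. I work throughout with the drift Laplacian $\mathcal{L}=\Delta-\langle X,\nabla\,\cdot\,\rangle$, self-adjoint for the Gaussian weight $e^{-|X|^2/2}$; the polynomial volume growth hypothesis is what makes all weighted integration-by-parts identities hold with no boundary term. From the defining equation \eqref{lamhyp} one has the Simons-type identity
$$\tfrac12\mathcal{L}|A|^2=|\nabla A|^2+|A|^2-|A|^4+\lambda f_3,\qquad f_3:=\sum_{i,j,k}h_{ij}h_{jk}h_{ki}.$$
Setting $g:=|A|^2-\beta_\lambda$ (so $0\le g\le\frac1{18}$ by hypothesis) and $P:=|A|^4-|A|^2-\lambda f_3$, this becomes $\tfrac12\mathcal{L}g=|\nabla A|^2-P$.

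The algebraic core is to control $P$ and the cubic $f_3$ near the shifted critical value $\beta_\lambda$. Expanding about $|A|^2=\beta_\lambda$ gives $P=(2\beta_\lambda-1)\,g+g^2+(\beta_\lambda^2-\beta_\lambda-\lambda f_3)$, whose leading coefficient $2\beta_\lambda-1=1+\lambda^2+|\lambda|\sqrt{\lambda^2+4}>0$ has the favorable sign. The constant and cubic contributions I would handle using the sharp pointwise bounds for $f_3$ in terms of $|A|^2$, $H$ and the traceless second fundamental form, together with the weighted integral identities and the smallness of $\lambda$. In parallel I would invoke a refined Kato-type inequality $|\nabla A|^2\ge(1+\varepsilon_n)\bigl|\nabla|A|\bigr|^2$ valid in codimension one, which is precisely the ingredient that makes the numerical threshold $\frac1{18}$ (rather than a smaller constant) admissible; $\gamma$ is then fixed so that all the $\lambda$-dependent coefficients arising here stay within the slack of the $\lambda=0$ estimate.

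Next I would combine the two weighted integral identities. Integrating $\tfrac12\mathcal{L}g=|\nabla A|^2-P$ against $e^{-f}$, with $f=|X|^2/2$, gives $\int|\nabla A|^2e^{-f}=\int P\,e^{-f}$, while integrating against $g\,e^{-f}$ and integrating by parts yields $-\tfrac12\int|\nabla g|^2e^{-f}=\int g|\nabla A|^2e^{-f}-\int gP\,e^{-f}$. Substituting the pointwise estimates of the previous step, using the refined Kato inequality to trade $\int g|\nabla A|^2e^{-f}$ against $\int|\nabla g|^2e^{-f}$, and inserting $g\le\frac1{18}$ and $|\lambda|\le\gamma$, the nonnegative quantity $\int|\nabla A|^2e^{-f}$ is bounded above by a multiple of itself with factor strictly less than one. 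Hence every inequality is an equality: $g\equiv0$, so $|A|^2\equiv\beta_\lambda$, and $\nabla A\equiv0$.

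Finally, $\nabla A\equiv0$ makes $M$ a complete hypersurface with parallel second fundamental form, hence, excluding the hyperplane since $|A|^2=\beta_\lambda>0$, a product $\mathbb{S}^k(r)\times\mathbb{R}^{n-k}$. Imposing $|A|^2\equiv\beta_\lambda$ together with \eqref{lamhyp} selects $k=1$ and the radius $r=\tfrac12(\sqrt{\lambda^2+4}-|\lambda|)$; moreover the curvature relation $\kappa_\ast^2=|\lambda|\kappa_\ast+1$ forced by $|A|^2=\beta_\lambda$ is compatible with the geometric relation $\kappa_\ast^2=\lambda\kappa_\ast+1$ forced by \eqref{lamhyp} only when $\lambda=|\lambda|>0$, which simultaneously identifies the cylinder and yields the sign assertion $\lambda>0$. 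I expect the principal obstacle to be the third step: keeping the two integral identities sharp enough that the threshold $\frac1{18}$ survives while absorbing all the $\lambda$-carrying cubic terms, since it is this balance that both pins down $\gamma$ and decides whether the constant can be taken as large as $\frac1{18}$.
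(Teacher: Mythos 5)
Your reduction to $F_\lambda=|A|^4-|A|^2-\lambda f_3$, the two weighted identities $\int|\nabla A|^2\rho\,d\mu=\int F_\lambda\rho\,d\mu$ and $-\tfrac12\int|\nabla S|^2\rho\,d\mu=\int(S-\beta_\lambda)\bigl(|\nabla A|^2-F_\lambda\bigr)\rho\,d\mu$, and your final classification step (Lawson's theorem, the radius computation $\lambda=\tfrac{k}{r}-r$, and the observation that $S_k=\beta_\lambda$ forces $k=1$ and $\lambda>0$) all match the paper. But the analytic core of your plan has a genuine gap: the two first-order identities, even combined with a Kato-type inequality, cannot close the argument. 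Both identities are sign-consistent under the pinching $0\le S-\beta_\lambda\le\delta$ --- the first is an equality between two quantities of the same order, and the second merely says $\tfrac12\int|\nabla S|^2\rho\,d\mu+\int(S-\beta_\lambda)|\nabla A|^2\rho\,d\mu\le\delta\int|\nabla A|^2\rho\,d\mu$, which is nearly a tautology given $S-\beta_\lambda\le\delta$. Nothing in this system forces $\nabla A\equiv 0$; your claimed self-improving bound ``$\int|\nabla A|^2$ bounded by a factor strictly less than one times itself'' cannot materialize, because $F_\lambda\approx(2\beta_\lambda-1)(S-\beta_\lambda)$ is not small relative to $|\nabla A|^2$ in any integrated sense. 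Moreover, the refined Kato inequality $|\nabla A|^2\ge(1+\varepsilon_n)\bigl|\nabla|A|\bigr|^2$ is not available here ($M$ is neither minimal nor of constant mean curvature --- for a $\lambda$-hypersurface $\nabla H=A(X^T)\not\equiv0$), and even if it were, it bounds $|\nabla S|$ from above by $|\nabla A|$, which is the wrong direction for producing a contradiction in your scheme. This is exactly the classical difficulty of ``second pinching'' above the critical value, going back to Peng--Terng: first-order identities only yield the gap theorems below $\beta_\lambda$ (Le--Sesum, Cao--Li, Guang).

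What the paper actually uses, and what your proposal is missing, is the second-order layer: the Simons-type identity for $|\nabla^2A|^2$ in terms of $\tfrac12\mathcal{L}|\nabla A|^2$, $(|A|^2-2)|\nabla A|^2$, $3(B_1-2B_2)$, $\tfrac32|\nabla S|^2$ and the new $\lambda$-term $-3\lambda C$ with $C=\sum h_{ijk}h_{ijl}h_{kl}$ (Lemma~\ref{lem1}(i)); the integral identity $\int(B_1-2B_2)\rho\,d\mu=\int(\tfrac12G-\tfrac14|\nabla S|^2)\rho\,d\mu$ (Lemma~\ref{lem1}(ii)); the symmetrization estimate $|\nabla^2A|^2\ge\tfrac34G$; the Ding--Xin algebraic inequality $3(B_1-2B_2)\le(S+C_1G^{1/3})|\nabla A|^2$; and the treatment of $\int|\nabla A|^3\rho\,d\mu$ via $-\int\langle\nabla|\nabla A|,\nabla S\rangle\rho\,d\mu=\int|\nabla A|\,\mathcal{L}S\,\rho\,d\mu$. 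The threshold $\tfrac1{18}$ is produced not by any Kato refinement but by the numerical optimization of the Young-inequality parameters ($\sigma=0.616$, $\epsilon=0.0577$, $\kappa=0.0434$), after which the $\lambda$-corrections ($r_\lambda$, $\tilde r_\lambda$, and the bound $|C|\le|A||\nabla A|^2$) are absorbed into a perturbation $\eta_\lambda\le0.005$ for $|\lambda|\le\gamma$ --- so your perturbative treatment of $\lambda$ is the right instinct, but it is applied to an estimate scheme that, as written, cannot reach the conclusion even when $\lambda=0$.
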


\section{rigidity of self-shrinkers}\label{sec2}

\par Let $M$ be an $n$-dimensional complete hypersurface in $\mathbb{R}^{n+1}$.
We shall make use of the following convention on the range of indices:
                     $$1\leq i, j, k, \ldots\leq n.$$
We choose a local orthonormal frame field $\{e_1, e_2, \ldots, e_{n+1}\}$
near a fixed point $x\in M$ over $\mathbb{R}^{n+1}$ such that
$\{e_i\}_{i=1}^{n}$ are tangent to $M$ and $e_{n+1}$ equals to the unit  normal vector $\xi$.
Let $\{\omega_1, \omega_2, \ldots, \omega_{n+1}\}$ be the dual frame fields of $\{e_1, e_2, \ldots, e_{n+1}\}$.
Denote by $R_{ijkl}$, $A:=\sum\limits_{i,j}h_{ij}\omega_{i}\otimes\omega_{j}$, $H:=\tr A$ and $S:=\tr A^2$ the Riemann curvature tensor, the second fundamental form, the mean curvature and the squared norm of the second fundamental form of $M$, respectively. We denote the first, the
second and the third covariant derivatives of the second fundamental
form of $M$ by
$$\nabla A=\sum\limits_{i,j,k}h_{ijk}\,\,\omega_{i}\otimes\omega_{j}\otimes\omega_{k},$$
$$\nabla^2 A=\sum\limits_{i,j,k,l}h_{ijkl}\,\,\omega_{i}\otimes\omega_{j}\otimes\omega_{k}\otimes\omega_{l},$$
$$\nabla^3 A=\sum\limits_{i,j,k,l,m}h_{ijklm}\,\,\omega_{i}\otimes\omega_{j}\otimes\omega_{k}\otimes\omega_{l}\otimes\omega_{m}.$$
The Gauss and Codazzi equations are given by
\begin{equation}\label{1.1}
R_{ijkl}=h_{ik}h_{jl}-h_{il}h_{jk},
\end{equation}
\begin{equation}\label{1.2}
h_{ijk}=h_{ikj}.
\end{equation}
We have the Ricci identities on $M$
\begin{equation}\label{2.4.3}
h_{ijkl}-h_{ijlk}=\sum\limits_{m}h_{im}R_{mjkl}+\sum\limits_{m}h_{mj}R_{mikl},
\end{equation}
\begin{equation}\label{2.4.4}
h_{ijklm}-h_{ijkml}=\sum\limits_{r}h_{rjk}R_{rilm}+\sum\limits_{r}h_{irk}R_{rjlm}+\sum\limits_{r}h_{ijr}R_{rklm}.
\end{equation}
We choose a local orthonormal frame $\{e_i\}$ such that
$h_{ij}=\mu_i\delta_{ij}$ at $x$.
By the Gauss equation (\ref{1.1}) and the Ricci identity (\ref{2.4.3}), we have
\begin{equation}\label{1.7}
t_{ij}:=h_{ijij}-h_{jiji}=\mu_i\mu_j(\mu_i-\mu_j).
\end{equation}
Set $u_{ijkl}=\frac{1}{4}(h_{ijkl}+h_{lijk}+h_{klij}+h_{jkli})$. Then we have
\begin{eqnarray}\label{1.9}
\sum\limits_{i,j,k,l}(h_{ijkl}^{2}-u_{ijkl}^{2})
&\geq&\nonumber\frac{6}{16}\sum\limits_{i\neq j}[(h_{ijij}-h_{jiji})^2+(h_{jiji}-h_{ijij})^2]\\
&=&\frac{3}{4}G,
\end{eqnarray}
i.e.,
\begin{equation}\label{1.10}
|\nabla^2 A|^2\geq\frac{3}{4}G,
\end{equation}
where $G=\sum\limits_{i,j}t^{2}_{ij}=2(Sf_4-f^{2}_{3})$ and $f_k=\tr A^k=\sum\limits_{i}\mu^{k}_{i}$.
In \cite{CM}, Colding-Minicozzi introduced the linear operator
$$\mathcal{L}=\Delta-\langle X, \nabla(\cdot)\rangle=e^{\frac{|X|^2}{2}}\dv\Big(e^{-\frac{|X|^2}{2}}\nabla(\cdot)\Big).$$
They showed that $\mathcal{L}$ is self-adjoint respect to the measure $\rho \,d\mu$, where $\rho=e^{-\frac{|X|^2}{2}}$.

Let $M$ be a self-shrinker with polynomial volume growth. By a computation (see \cite{DX,XX}), we have following equalities
\begin{equation}\label{1.3}
\mathcal{L}|A|^2=2|A|^2-2|A|^4+2|\nabla A|^2,
\end{equation}
\begin{equation}\label{1.4}
|\nabla S|^2=\frac{1}{2}\mathcal{L}S^2+2S^2(S-1)-2S|\nabla A|^2,
\end{equation}
\begin{equation}\label{1.5}
|\nabla^2 A|^2=\frac{1}{2}\mathcal{L}|\nabla A|^2+(|A|^2-2)|\nabla A|^2+3(B_1-2B_2)+\frac{3}{2}|\nabla S|^2,
\end{equation}
\begin{equation}\label{1.8}
\int_M(B_1-2B_2)\rho
\,d\mu=\int_M\Big(\frac{1}{2}G-\frac{1}{4}|\nabla S|^2\Big)\rho
\,d\mu,
\end{equation}
where $B_1=\sum\limits_{i,j,k,l,m}h_{ijk}h_{ijl}h_{km}h_{ml}$ and
$B_2=\sum\limits_{i,j,k,l,m}h_{ijk}h_{klm}h_{im}h_{jl}$.

Now we are in a position to prove our rigidity theorem for self-shrinkers in the Euclidean space.
\begin{proof}[Proof of Theorem \ref{mthm1}]
From (\ref{1.10}), (\ref{1.5}) and (\ref{1.8}), we have
\begin{eqnarray}\label{2.1}
& &\nonumber\int_M(B_1-2B_2)\rho \,d\mu\\
&=&\int_M\Big(\frac{1}{2}G-\frac{1}{4}|\nabla S|^2\Big)\rho \,d\mu\nonumber\\
&\leq&\nonumber\int_M\left(\frac{2}{3}|\nabla^2 A|^2-\frac{1}{4}|\nabla S|^2\right)\rho \,d\mu\\
&=&\int_M\left[\frac{2}{3}(S-2)|\nabla A|^2+2(B_1-2B_2)+\frac{3}{4}|\nabla S|^2\right]\rho \,d\mu.
\end{eqnarray}
This implies that
\begin{equation}\label{2.2}
\int_M(B_1-2B_2)\rho \,d\mu\geq\int_M\left[\frac{2}{3}(2-S)|\nabla A|^2-\frac{3}{4}|\nabla S|^2\right]\rho \,d\mu.
\end{equation}
By Lemma 4.2 in \cite{DX} and Young's inequality, for $\sigma>0$, we have
\begin{eqnarray}\label{2.3}
3(B_1-2B_2)&\leq&\nonumber(S+C_1G^{1/3})|\nabla A|^2\\
&\leq&S|\nabla A|^2+\frac{1}{3}C_1\sigma^2G+\frac{2}{3}C_1\sigma^{-1}|\nabla A|^3,
\end{eqnarray}
where $C_1=\frac{2\sqrt{6}+3}{\sqrt[3]{21\sqrt{6}+103/2}}$.
Notice that
\begin{equation}\label{2.4}
-\int_M \left\langle \nabla|\nabla A|,\nabla S\right\rangle\rho\,d\mu=\int_M|\nabla A|\mathcal{L} S\rho\,d\mu.
\end{equation}
This together with (\ref{1.3}) implies
\begin{eqnarray}\label{2.5}
\int_M|\nabla A|^3\rho\,d\mu&=&\nonumber\int_M\left(\frac{1}{2}\mathcal{L}S-S+S^2\right)|\nabla A|\rho\,d\mu\\
&=&\nonumber\int_M\left[(S^2-S)|\nabla A|-\frac{1}{2} \left\langle \nabla|\nabla A|,\nabla S\right\rangle\right]\rho\,d\mu\\
&\leq&\int_M\left[(S^2-S)|\nabla A|+\epsilon|\nabla^2 A|^2 +\frac{1}{16\epsilon} |\nabla S|^2\right]\rho\,d\mu.
\end{eqnarray}
From (\ref{1.5}), (\ref{1.8}), (\ref{2.3}), (\ref{2.5}), we have
\begin{eqnarray}\label{2.6}
& &\nonumber3\int_M (B_1-2B_2)\rho\,d\mu\\
&\leq&\nonumber\int_M \left(S|\nabla A|^2+\frac{1}{3}C_1\sigma^2G+\frac{2}{3}C_1\sigma^{-1}|\nabla A|^3\right)\rho\,d\mu\\
&\leq&\nonumber \int_M \left(S|\nabla A|^2+\frac{1}{3}C_1\sigma^2G\right)\rho\,d\mu\\
& &\nonumber+\frac{2}{3}C_1\sigma^{-1}\int_M\left[(S^2-S)|\nabla A|+\epsilon|\nabla^2 A|^2+\frac{1}{16\epsilon} |\nabla S|^2\right]\rho\,d\mu\\
&=&\nonumber \int_M \left[S|\nabla A|^2+\frac{2}{3}C_1\sigma^2\left(B_1-2B_2+\frac{1}{4}|\nabla S|^2\right)\right]\rho\,d\mu\\
& &\nonumber+\frac{2}{3}C_1\sigma^{-1}\int_M\left[(S^2-S)|\nabla A|+\frac{1}{16\epsilon} |\nabla S|^2\right]\rho\,d\mu\\
& &+\frac{2}{3}C_1\sigma^{-1}\epsilon\int_M\left[(S-2)|\nabla A|^2+3(B_1-2B_2)+\frac{3}{2}|\nabla S|^2\right]\rho\,d\mu.
\end{eqnarray}
Thus, we obtain
\begin{eqnarray}\label{2.7}
& &\nonumber3\theta\int_M (B_1-2B_2)\rho\,d\mu\\
&\leq&\nonumber\int_M \left[S+\frac{2}{3}C_1\sigma^{-1}\epsilon(S-2)\right]|\nabla A|^2\rho\,d\mu\\
& &\nonumber +\left(\frac{1}{6}C_1\sigma^2+C_1\sigma^{-1}\epsilon+\frac{1}{24\epsilon}C_1\sigma^{-1}\right)\int_M |\nabla S|^2\rho\,d\mu\\
& &+\frac{2}{3}C_1\sigma^{-1}\int_M(S^2-S)|\nabla A|\rho\,d\mu,
\end{eqnarray}
where $\theta=1-\left(\frac{2}{9} C_1 \sigma^2 + \frac{2}{3} C_1 \sigma^{-1}\epsilon \right)$.
We restrict $\sigma$ and $\epsilon$ such that $\theta \geq 0$.

Combining (\ref{2.2}) and (\ref{2.7}), we have
\begin{eqnarray}\label{2.8}
0&\leq&\nonumber\int_M \left[S+(\frac{2}{3}C_1\sigma^{-1}\epsilon+2\theta)(S-2)\right]|\nabla A|^2\rho\,d\mu\\
& &\nonumber + \left(\frac{1}{6}C_1\sigma^2+C_1\sigma^{-1}\epsilon+\frac{1}{24\epsilon}C_1\sigma^{-1}+\frac{9}{4}\theta\right)\int_M |\nabla S|^2\rho\,d\mu\\
& &+\frac{2}{3}C_1\sigma^{-1}\int_M(S^2-S)|\nabla A|\rho\,d\mu.
\end{eqnarray}
To simplify the notation, we put
$$L_1:=\frac{2}{3}C_1\sigma^{-1}\epsilon+2\theta,$$
$$L_2:=\frac{1}{6}C_1\sigma^2+C_1\sigma^{-1}\epsilon+\frac{1}{24\epsilon}C_1\sigma^{-1}+\frac{9}{4}\theta.$$
Then (\ref{2.8}) is reduced to
\begin{eqnarray}\label{2.8.1}
0&\leq&\nonumber\int_M \left[S+L_1 (S-2)\right]|\nabla A|^2\rho\,d\mu\\
& &+ L_2 \int_M |\nabla S|^2\rho\,d\mu
+\frac{2}{3}C_1\sigma^{-1}\int_M(S^2-S)|\nabla A|\rho\,d\mu.
\end{eqnarray}
When $0\leq S-1\leq\delta$, we have
\begin{eqnarray}\label{2.9}
\frac{1}{2}\int_M |\nabla S|^2\rho\,d\mu&=&\nonumber\int_MS(S-1)^2\rho\,dM-\int_M(S-1)|\nabla A|^2\rho\,d\mu\\
        &\leq&\int_M(1-S+\delta)|\nabla A|^2\rho\,d\mu.
\end{eqnarray}
For $\kappa>0$, we have
\begin{eqnarray}\label{2.10}
\int_MS(S-1)|\nabla A|\rho\,dM&\leq&\nonumber2(1+\delta)\kappa\int_MS(S-1)\rho\,d\mu\\
& &\nonumber+\frac{1}{8(1+\delta)\kappa}\int_MS(S-1)|\nabla A|^2\rho\,d\mu\\
&\leq&\nonumber2(1+\delta)\kappa\int_M |\nabla A|^2\rho\,d\mu\\
& &+\frac{1}{8\kappa}\int_M(S-1)|\nabla A|^2\rho\,dM.
\end{eqnarray}
Substituting (\ref{2.9}) and (\ref{2.10}) into (\ref{2.8.1}), we obtain
\begin{eqnarray}\label{2.11}
0&\leq&\nonumber\int_M [(1+L_1)(S-1)+1-L_1]|\nabla A|^2\rho\,d\mu\\
& &\nonumber+2L_2\int_M(1-S+\delta)|\nabla A|^2\rho\,d\mu\\
& &\nonumber+\frac{4}{3}C_1\sigma^{-1}(1+\delta)\kappa\int_M |\nabla A|^2\rho\,d\mu\\
& &\nonumber+\frac{1}{12\kappa}C_1\sigma^{-1}\int_M(S-1)|\nabla A|^2\rho\,dM\\
&=&\nonumber\int_M \left(1+L_1-2L_2+\frac{1}{12\kappa}C_1\sigma^{-1}\right)(S-1)|\nabla A|^2\rho\,d\mu\\
& &+\int_M \left[1-L_1+\frac{4}{3}C_1\sigma^{-1}\kappa+\left(\frac{4}{3}C_1\sigma^{-1}\kappa+2L_2\right)\delta\right]|\nabla A|^2\rho\,d\mu.
\end{eqnarray}
Let $\sigma=0.616$, $\epsilon=0.0577$ and $\kappa=0.0434$. By a
computation, we have
$$\theta >0,\,\,\,1+L_1-2L_2+\frac{1}{12\kappa}C_1\sigma^{-1}<0,$$
$$1-L_1+\frac{4}{3}C_1\sigma^{-1}\kappa<-0.452,$$
$$\frac{4}{3}C_1\sigma^{-1}\kappa+2L_2<8.03.$$
We take $\delta=1/18$. Then the coefficients of the integrals in
(\ref{2.11}) are both negative. Therefore, we have $|\nabla
A|\equiv0$ and $S\equiv1$, i.e., $M$ either the round sphere
$\mathbb{S}^n(\sqrt{n})$, or the cylinder
$\mathbb{S}^k(\sqrt{k})\times\mathbb{R}^{n-k},\,\, 1\leq k\leq n-1$.
\end{proof}

\section{rigidity of $\lambda$-hypersurfaces}

\par Let $M$ be an $n$-dimensional complete $\lambda$-hypersurface with polynomial volume growth in $\mathbb{R}^{n+1}$. We adopt the same notations as in Section \ref{sec2}. To simplify the computation, we choose local frame $\{e_i\}$, such that $\nabla_{e_i} e_j=0$ at $p\in M$, i.e., $\overline{\nabla}_{e_i}e_j=h_{ij}\xi$, and $h_{ij}=\mu_i\delta_{ij}$.
Then we have
\begin{equation}\label{2.4.5}
\nabla_{e_i}H=-\nabla_{e_i}\langle X,\xi\rangle=h_{ik}\langle X, e_k\rangle,
\end{equation}
and
\begin{eqnarray}\label{2.4.6}
\Hess H(e_i, e_j)&=&\nonumber-\nabla_{e_i}\nabla_{e_j}\langle X, \xi\rangle\\
&=&h_{ijk}\langle X, e_k\rangle+h_{ij}-(H-\lambda)h_{ik}h_{kj}.
\end{eqnarray}
Taking $f_k=\tr A^k=\sum\limits_{i}\mu^{k}_{i}$, we obtain
\begin{eqnarray}\label{2.4.7}
\mathcal{L}|A|^2&=&\nonumber \Delta|A|^2-\langle X, \nabla|A|^2\rangle\\
&=&\nonumber 2\sum\limits_{i,j}h_{ij}\Delta h_{ij}+2|\nabla A|^2-2\sum\limits_{i,j,k}h_{ij}h_{ijk}\langle X, e_k\rangle\\
&=&\nonumber 2(\sum\limits_{i,j}h_{ij}\nabla_{e_i}\nabla_{e_j}H+H\sum\limits_{i,j,k}h_{ij}h_{jk}h_{ki}-|A|^4)\\
& &\nonumber+2|\nabla A|^2-2\sum\limits_{i,j,k}h_{ij}h_{ijk}\langle X, e_k\rangle\\
&=&2|A|^2-2|A|^4+2\lambda f_3+2|\nabla A|^2.
\end{eqnarray}

\begin{proof}[Proof of Theorem \ref{mthm2}]
Putting $F_\lambda=|A|^4-|A|^2-\lambda f_3$, we have
\begin{equation}\label{2.4.7-1}
\int_M F_\lambda \rho dM=\int_M |\nabla A|^2 \rho dM.
\end{equation}
We also have
\begin{equation}\label{2.4.7-2}
|\nabla S|^2=\frac{1}{2}\mathcal{L}S^2+2SF_\lambda-2S|\nabla A|^2.
\end{equation}
Notice that
\begin{equation}\label{2.4.7-3}
F_\lambda\geq|A|^2(|A|^2-1-|\lambda|\cdot|A|).
\end{equation}
If $|A|^2\geq\beta_\lambda:=\frac{1}{2}(2+\lambda^2+|\lambda|\sqrt{\lambda^2+4})$, then $F_\lambda\geq0$.
Moreover, if $F_\lambda=0$, then $|A|^2=\beta_\lambda$. Denote by $\alpha_\lambda=\frac{1}{2}(2+\lambda^2-|\lambda|\sqrt{\lambda^2+4})$.
When $\beta_\lambda\leq|A|^2\leq\beta_\lambda+\delta$, we have the following upper bound for $F_\lambda$.
\begin{eqnarray}\label{2.4.7-4}
F_\lambda&\leq&\nonumber|A|^2(|A|^2-1+|\lambda|\cdot|A|)\\
&=&\nonumber|A|^2(|A|+\sqrt{\beta_\lambda})(|A|+\sqrt{\alpha_\lambda})^{-1}(|A|^2-\beta_\lambda+q_\lambda)\\    &\leq&\nonumber|A|^2(|A|^2-\beta_\lambda+q_\lambda)\Big(1+\frac{\sqrt{\beta_\lambda}-\sqrt{\alpha_\lambda}}{\sqrt{\beta_\lambda}+\sqrt{\alpha_\lambda}}\Big)\\
&\leq&|A|^2(|A|^2-\beta_\lambda+r_\lambda),
\end{eqnarray}
where $q_\lambda=|\lambda|\sqrt{\lambda^2+4}$ ,  $r_\lambda=q_\lambda+\lambda^2+\frac{|\lambda|\delta}{\sqrt{\lambda^2+4}}$.

For $|\nabla^2 A|$ and the integral of $B_1-2B_2$, we obtain the following lemma.
\begin{lemma}\label{lem1}
If $M$ is a $\lambda$-hypersurface of $\mathbb{R}^{n+1}$, then we have:\\
(i) $|\nabla^2 A|^2=\frac{1}{2}\mathcal{L}|\nabla A|^2+(|A|^2-2)|\nabla A|^2+3(B_1-2B_2)+\frac{3}{2}|\nabla S|^2-3\lambda C$,\\
(ii) $\int_M (B_1-2B_2)\rho dM=\int_M(\frac{1}{2}G-\frac{1}{4}|\nabla S|^2)\rho dM$,\\
where $B_1=\sum\limits_{i,j,k,l,m}h_{ijk}h_{ijl}h_{km}h_{ml}$, $B_2=\sum\limits_{i,j,k,l,m}h_{ijk}h_{klm}h_{im}h_{jl}$,
    $C=\sum\limits_{i,j,k,l}h_{ijk}h_{ijl}h_{kl}$, $G=\sum\limits_{i,j}t^{2}_{ij}=2(Sf_4-f^{2}_{3})$ and $t_{ij}=h_{ijij}-h_{jiji}=\mu_i\mu_j(\mu_i-\mu_j)$.
\end{lemma}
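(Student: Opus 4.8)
The plan is to treat both identities as $\lambda$-deformations of the self-shrinker formulas (\ref{1.5}) and (\ref{1.8}), keeping careful track of exactly where the constant $\lambda$ enters. The starting point is a Simons-type equation for the drift operator $\mathcal{L}$. Combining the Hessian formula (\ref{2.4.6}) with the Euclidean Simons identity $\Delta h_{ij}=\Hess H(e_i,e_j)+H\sum_k h_{ik}h_{kj}-|A|^2h_{ij}$, substituting $H-\lambda=-\langle X,\xi\rangle$ from (\ref{lamhyp}), and recognising $\langle X,\nabla h_{ij}\rangle=\sum_k h_{ijk}\langle X,e_k\rangle$, I would obtain
\[ \mathcal{L}h_{ij}=(1-|A|^2)h_{ij}+\lambda\sum_k h_{ik}h_{kj}. \]
This is the only place the defining equation feeds in; it is consistent with (\ref{2.4.7}) upon contracting with $h_{ij}$, and the sole new term relative to the self-shrinker case is $\lambda\sum_k h_{ik}h_{kj}$.

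For part (i), I would start from the pointwise identity $\tfrac12\mathcal{L}|\nabla A|^2=|\nabla^2A|^2+\sum_{i,j,k}h_{ijk}\mathcal{L}h_{ijk}$, so it suffices to compute $\sum h_{ijk}\mathcal{L}h_{ijk}$. Writing $\mathcal{L}h_{ijk}=\nabla_k(\mathcal{L}h_{ij})+[\nabla_k,\langle X,\nabla\rangle]h_{ij}-[\nabla_k,\Delta]h_{ij}$ and using $\nabla_k\langle X,e_m\rangle=\delta_{km}+(\lambda-H)h_{km}$ together with the Ricci identity (\ref{2.4.4}) and the Gauss equation (\ref{1.1}) to evaluate the two commutators, the terms not involving $\lambda$ are literally those of the $\lambda=0$ computation and hence reproduce $(|A|^2-2)|\nabla A|^2+3(B_1-2B_2)+\tfrac32|\nabla S|^2$ as in (\ref{1.5}). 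The $\lambda$-contributions arise from two sources: differentiating $\lambda\sum_k h_{ik}h_{kj}$ and contracting against $h_{ijk}$ gives $2\lambda C$ (both resulting contractions being identified with $C=\sum h_{ijk}h_{ijl}h_{kl}$ via the total symmetry of $h_{ijk}$), while the drift commutator contributes a further $\lambda C$ through the $(\lambda-H)h_{km}$ piece. Thus $\sum h_{ijk}\mathcal{L}h_{ijk}$ acquires $+3\lambda C$, and the sign in $|\nabla^2A|^2=\tfrac12\mathcal{L}|\nabla A|^2-\sum h_{ijk}\mathcal{L}h_{ijk}$ delivers the asserted $-3\lambda C$.

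For part (ii), I would argue that the identity is in fact $\lambda$-independent, so the self-shrinker derivation of (\ref{1.8}) carries over unchanged. Integrating $\int_M(B_1-2B_2)\rho\,d\mu$ by parts via $\nabla_k\rho=-\langle X,e_k\rangle\rho$ (equivalently, the self-adjointness of $\mathcal{L}$), one moves a covariant derivative off $\nabla A$; applying Codazzi (\ref{1.2}) to symmetrise and the Ricci identity (\ref{2.4.3}) with Gauss (\ref{1.1}) to rewrite the commutators in terms of $t_{ij}=\mu_i\mu_j(\mu_i-\mu_j)$ produces $\tfrac12 G$, while the drift terms recombine through (\ref{2.4.5}) into $-\tfrac14|\nabla S|^2$. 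The key observation is that (\ref{2.4.5}) reads $\nabla_iH=\sum_k h_{ik}\langle X,e_k\rangle$ with no $\lambda$, since the constant $\lambda$ is killed by differentiation; as the whole computation uses only Codazzi, Ricci, Gauss, and the divergence structure of $\mathcal{L}$ — never the Simons equation — no $\lambda$-term survives.

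The main obstacle is the commutator bookkeeping in (i): one must collect every curvature contraction generated by (\ref{2.4.4}) and every drift term generated by $[\nabla_k,\langle X,\nabla\rangle]$, then verify that the $\lambda$-free part closes up to precisely the three self-shrinker terms while the $\lambda$-dependent part sums to exactly $3\lambda C$ with the correct coefficient. Throughout, the polynomial volume growth hypothesis is what allows one to discard boundary contributions and to legitimate the integrations by parts in (ii).
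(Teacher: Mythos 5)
Your proposal is correct and follows essentially the same route as the paper: part (i) is the same Simons-type computation (the paper differentiates the Hessian formula (\ref{2.4.6}) and evaluates $\Delta h_{ijk}$ via the Ricci identities (\ref{2.4.3})--(\ref{2.4.4}) and Gauss equation, which is your commutator bookkeeping in a slightly different packaging), and your $\lambda$-accounting is exactly what happens there --- $2\lambda C$ from differentiating the reaction term and $\lambda C$ from the $(\lambda-H)h_{km}$ piece, with all $H$-terms cancelling as in the self-shrinker case. Your observation for part (ii) also matches the paper precisely: its proof runs through the divergence identity $\sum_j(h_{ij}\rho)_j=0$, which holds because (\ref{2.4.5}) carries no $\lambda$ (the constant is killed by differentiation), so the self-shrinker argument of (\ref{1.8}) goes through verbatim.
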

\begin{proof}
(i) Applying Ricci identities (\ref{2.4.3}) and (\ref{2.4.4}), we have
\begin{eqnarray}\label{lem1.2}
\Delta
h_{ijk}&=&h_{ijkll}=(h_{ijlk}+h_{ir}R_{rjkl}+h_{rj}R_{rikl})_l\nonumber\\
&=&h_{ijllk}+h_{rjl}R_{rikl}+h_{irl}R_{rjkl}+h_{ijr}R_{rlkl}+(h_{ir}R_{rjkl}+h_{rj}R_{rikl})_l\nonumber\\
&=&(h_{ljli}+h_{lr}R_{rjil}+h_{rj}R_{rlil})_k+h_{rjl}R_{rikl}+h_{irl}R_{rjkl}+h_{ijr}R_{rlkl}\nonumber\\
&&+h_{irl}R_{rjkl}+h_{rjl}R_{rikl}+h_{ir}(R_{rjkl})_l+h_{rj}(R_{rikl})_l\nonumber\\
&=&H_{jik}+h_{rkl}R_{rjil}+h_{rjk}R_{rlil}+2h_{rjl}R_{rikl}+2h_{ril}R_{rjkl}+h_{rij}R_{rlkl}\nonumber\\
&&+h_{ir}(R_{rjkl})_l+h_{rj}(R_{rikl})_l+h_{lr}(R_{rjil})_k+h_{rj}(R_{rlil})_k.
\end{eqnarray}
It follows from (\ref{2.4.6}) that
\begin{equation}\label{lem1.3}
H_{ji}=h_{jli}\langle X,e_l\rangle+h_{ij}+(\lambda-H)h_{il}h_{jl}.
\end{equation}
Since $\langle X,\xi\rangle=\lambda-H$, we compute the covariant derivative of $H_{ji}$
\begin{eqnarray}\label{lem1.4}
H_{jik}&=&h_{jlik}\langle X,e_l\rangle+h_{jli}\langle
e_k,e_l\rangle+h_{jli}\langle X,\overline{\nabla}_{e_k}e_l
\rangle+h_{ijk}\nonumber\\
&&-H_kh_{il}h_{jl}+(\lambda-H)(h_{ikl}h_{jl}+h_{il}h_{jkl})\nonumber\\
&=&h_{jlik}\langle X,e_l\rangle+2h_{ijk}-H_kh_{il}h_{jl}\nonumber\\
&&+(\lambda-H)(h_{il}h_{jkl}+h_{jl}h_{ikl}+h_{kl}h_{jli}).
\end{eqnarray}
Combining (\ref{lem1.2}) and (\ref{lem1.4}), we have
\begin{eqnarray}\label{lem1.5}
\Delta h_{ijk}&=&h_{jlik}\langle
X,e_l\rangle+2h_{ijk}-h_{il}h_{jl}H_{k}\nonumber\\
&&+(\lambda-H)(h_{il}h_{jlk}+h_{jl}h_{ikl}+h_{kl}h_{jli})\nonumber\\
&&+h_{rkl}R_{rjil}+h_{rjk}R_{rlil}+2h_{rjl}R_{rikl}+2h_{ril}R_{rjkl}+h_{rij}R_{rlkl}\nonumber\\
&&+h_{ir}(R_{rjkl})_l+h_{rj}(R_{rikl})_l+h_{lr}(R_{rjil})_k+h_{rj}(R_{rlil})_k.
\end{eqnarray}
The Gauss equation (\ref{2.4}) imples
\begin{eqnarray}\label{lem1.6}
&& h_{ijk}(h_{rkl}R_{rjil}+h_{rjk}R_{rlil}+2h_{rjl}R_{rikl}+2h_{ril}R_{rjkl}+h_{rij}R_{rlkl}\nonumber\\
&&+h_{ir}(R_{rjkl})_l+h_{rj}(R_{rikl})_l+h_{lr}(R_{rjil})_k+h_{rj}(R_{rlil})_k)\nonumber\\
&=& h_{ijk}(6h_{rkl}h_{ri}h_{jl}-6h_{rkl}h_{rl}h_{ij}+3h_{rij}h_{rk}h_{ll}-3h_{rij}h_{rl}h_{kl}\nonumber\\
&&+3h_{ir}h_{rk}h_{jll}-2h_{ir}h_{jk}h_{rll}-h_{ijk}h_{rl}^2).
\end{eqnarray}
From (\ref{2.4.3}) and (\ref{2.4.5}), we have
\begin{eqnarray}\label{lem1.7}
h_{ijk}(h_{ijlk}-h_{ijkl})\langle X,e_l\rangle&=&h_{ijk}(h_{ir}R_{rjlk}+h_{rj}R_{rilk})\langle X,e_l\rangle\nonumber\\
&=&2h_{ijk}h_{ir}h_{jk}H_r-2h_{ijk}h_{ir}h_{rk}H_j.
\end{eqnarray}
Substituting (\ref{lem1.6}) and (\ref{lem1.7}) into (\ref{lem1.5}), we obtain
\begin{eqnarray}\label{lem1.8}
&&\frac{1}{2}(\Delta-\langle
X,\nabla\cdot\rangle)h_{ijk}^2=h_{ijk} (\Delta
h_{ijk}-h_{ijkl}\langle X,e_l\rangle)+h_{ijkl}^2\nonumber\\
&=&h_{ijk}(h_{ijlk}-h_{ijkl})\langle
X,e_l\rangle+2h_{ijk}^2+h_{ijkl}^2\nonumber\\
&&+(\lambda-H)h_{ijk}(h_{il}h_{jlk}+h_{jl}h_{ikl}+h_{kl}h_{jli})-h_{ijk}h_{il}h_{jl}H_k\nonumber\\
&&+h_{ijk}(h_{rkl}R_{rjil}+h_{rjk}R_{rlil}+2h_{rjl}R_{rikl}+2h_{ril}R_{rjkl}\nonumber\\
&&+h_{rij}R_{rlkl}
+h_{ir}(R_{rjkl})_l+h_{rj}(R_{rikl})_l+h_{lr}(R_{rjil})_k+h_{rj}(R_{rlil})_k)\nonumber\\
&=&2h_{ijk}h_{ir}h_{jk}H_r-2h_{ijk}h_{ir}h_{rk}H_j+2h_{ijk}^2+h_{ijkl}^2\nonumber\\
&&+3(\lambda-H)h_{ijk}h_{ijl}h_{kl}-h_{il}h_{jl}H_kh_{ijk}\nonumber\\
&&+h_{ijk}(6h_{rkl}h_{ri}h_{jl}-6h_{rkl}h_{rl}h_{ij}+3h_{rij}h_{rk}h_{ll}-3h_{rij}h_{rl}h_{kl}+3h_{ir}h_{rk}h_{jll}\nonumber\\
&&-2h_{ir}h_{jk}h_{rll}-h_{ijk}h_{rl}^2)\nonumber\\
&=&(2-|A|^2)h_{ijk}^2+h_{ijkl}^2+h_{ijk}(6h_{iu}h_{jv}h_{uvk}-3h_{iju}h_{uv}h_{kv})-\frac{3}{2}|\nabla|A|^2|^2\nonumber\\
& &+3\lambda h_{ijk}h_{ijl}h_{kl}.
\end{eqnarray}

(ii) It follows from the divergence theorem that
\begin{equation}\label{2.4.10-2}
\int_M \sum\limits_{i,\,j}(f_3)_{ij}h_{ij}\rho dM=-\int_M \sum\limits_{i,\,j}(f_3)_{i}(h_{ij}\rho)_{j} dM.
\end{equation}
By the condition $H=-X^N+\lambda$, we have
\begin{eqnarray}\label{2.4.10}
\sum\limits_{i,\,j}(h_{ij}\rho)_{j}&=&\nonumber\sum\limits_{i,\,j}h_{jji}\rho-\sum\limits_{i,\,j}h_{ij}\rho \langle e_i, X\rangle\\
&=&\nonumber-e_i(\langle \xi, X\rangle)\rho+\sum\limits_{i,\,j}\langle \nabla_{e_j}\xi, X\rangle\rho\\
&=&0.
\end{eqnarray}
This together with the divergence theorem implies
\begin{eqnarray}\label{2.4.10-3}
\int_M \sum\limits_{i,j,k}h_{ik}h_{kj}S_{ij}\rho dM&=&\nonumber-\int_M \sum\limits_{i,j,k}h_{ikj}h_{kj}S_{i}\rho dM\\
&=&-\frac{1}{2}\int_M |\nabla S|^2\rho dM.
\end{eqnarray}
Applying Ricci identity (\ref{2.4.3}), we get
\begin{equation}\label{2.4.9}
h_{ijij}-h_{jiji}=\mu_i\mu_j(\mu_i-\mu_j).
\end{equation}
Thus, we have
\begin{eqnarray}\label{2.4.10-1}
\frac{1}{3}\sum\limits_{i,j}h_{ij}(f_3)_{ij}&=&\nonumber\sum\limits_{i,k}h_{iikk}\mu_k\mu_{i}^2+2\sum\limits_{i,j,k}h_{ijk}^2\mu_i\mu_k\\
&=&\nonumber\sum\limits_{i,k}[h_{kkii}+(\mu_i-\mu_k)\mu_i\mu_k]\mu_k\mu_{i}^2+2B_2\\
&=&\nonumber\sum\limits_{i}(\frac{S_{ii}}{2}-\sum\limits_{j,k}h_{ijk}^2)\mu_{i}^2
+\sum\limits_{i,k}\mu_{i}^3\mu_{k}^2(\mu_i-\mu_k)+2B_2\\
&=&\sum\limits_{i,j,k}\frac{h_{ik}h_{kj}}{2}S_{ij}+Sf_4-f_{3}^2-(B_1-2B_2).
\end{eqnarray}
Substituting (\ref{2.4.10}), (\ref{2.4.10-3}) and (\ref{2.4.10-1}) into (\ref{2.4.10-2}), we obtain
\begin{equation}
\int_M (B_1-2B_2)\rho dM=\int_M[Sf_4-f_{3}^{2}-\frac{1}{4}|\nabla S|^2]\rho dM.
\end{equation}
\end{proof}

Combining (\ref{1.10}) and Lemma \ref{lem1}, we derive the following inequality.
\begin{eqnarray}
\int_M (B_1-2B_2) \rho d M & = &\nonumber \int_M \left( \frac{1}{2} G - \frac{1}{4} | \nabla S|^2 \right) \rho d M\\
& \leq &\nonumber \frac{2}{3} \int_M | \nabla^2 A |^2 \rho d M - \frac{1}{4}\int_M | \nabla S |^2 \rho d M\\
& = &\nonumber \frac{2}{3} \int_M (S-2) | \nabla A |^2 \rho d M + 2 \int_M (B_1-2B_2) \rho d M \\
& &+ \frac{3}{4} \int_M | \nabla S |^2 \rho d M - 2 \lambda \int_M C \rho dM.
\end{eqnarray}
This implies
\begin{eqnarray}\label{2.4.13}
\int_M (B_1-2B_2) \rho d M &\geq &\nonumber- \frac{2}{3} \int_M (S-2) | \nabla A |^2\rho d M\\
& &- \frac{3}{4} \int_M | \nabla S |^2 \rho d M + 2 \lambda \int_M C \rho d M.
\end{eqnarray}
For any $\sigma>0$, using Lemma 4.2 in \cite{DX} and Young's inequality, we have
\begin{equation}\label{2.4.14}
3(B_1-2B_2)\leq (S + C_1 G^{1 / 3})|\nabla A|^2\leq S|\nabla A|^2+\frac{1}{3}C_1\sigma^2 G+\frac{2}{3}C_1\sigma^{-1}|\nabla A|^3,
\end{equation}
where $C_1=\frac{2\sqrt{6}+3}{\sqrt[3]{21\sqrt{6}+103/2}}$.
Notice that
\begin{equation}\label{2.4.15}
-\int_M \nabla|\nabla A|\cdot\nabla S\rho dM=\int_M|\nabla A|\mathcal{L} S\rho dM.
\end{equation}
This together with (\ref{2.4.7}) implies
\begin{eqnarray}\label{2.4.16}
& &\nonumber\int_M|\nabla A|^3\rho dM\\
&=&\nonumber\int_M(F_\lambda+\frac{1}{2}\mathcal{L}|A|^2)|\nabla A|\rho dM\\
&=&\nonumber\int_MF_\lambda|\nabla A|\rho dM-\frac{1}{2}\int_M \nabla|\nabla A|\cdot\nabla S\rho dM\\
&\leq&\int_MF_\lambda|\nabla A|\rho dM+\epsilon\int_M|\nabla^2 A|^2\rho dM+\frac{1}{16\epsilon}\int_M |\nabla S|^2\rho dM,
\end{eqnarray}
for arbitrary $\epsilon>0$. We assume that $S$ satisfies the pinching condition $\beta_\lambda\leq S\leq\beta_\lambda+\delta$.
From (\ref{2.4.7-1}) and (\ref{2.4.7-2}), we have
\begin{eqnarray}\label{2.4.17}
\frac{1}{2}\int_M |\nabla S|^2\rho dM&=&\int_M(S-\beta_\lambda)F_\lambda\rho dM-\int_M(S-\beta_\lambda)|\nabla A|^2\rho dM \nonumber\\
&\leq&\int_M(-S+\beta_\lambda+\delta)|\nabla A|^2\rho dM.
\end{eqnarray}
For any $\kappa>0$, (\ref{2.4.7-4}) implies
\begin{eqnarray}\label{2.4.18}
\int_MF_\lambda|\nabla A|\rho dM&\leq&\nonumber2(\beta_\lambda+\delta)\kappa\int_MF_\lambda\rho dM\\
& &\nonumber+\frac{1}{8(\beta_\lambda+\delta)\kappa}\int_MF_\lambda|\nabla A|^2\rho dM\\
&\leq&\nonumber2(\beta_\lambda+\delta)\kappa\int_MF_\lambda\rho dM\\
& &\nonumber+\frac{1}{8(\beta_\lambda+\delta)\kappa}\int_M(|A|^2-\beta_\lambda+r_\lambda)|A|^2|\nabla A|^2\rho dM\\
&\leq&\nonumber2(\beta_\lambda+\delta)\kappa\int_M |\nabla A|^2\rho dM\\
& &+\frac{1}{8\kappa}\int_M(|A|^2-\beta_\lambda+r_\lambda)|\nabla A|^2\rho dM.
\end{eqnarray}
For $C$, we have the estimate
\begin{equation}\label{2.4.19}
|C|=|\sum\limits_{i,j,k}\mu_i h_{ijk}^2|\leq|A||\nabla A|^2.
\end{equation}
Combining (\ref{2.4.14}), (\ref{2.4.16}) and Lemma \ref{lem1}, we obtain
\begin{eqnarray}
&&3 \int_M (B_1-2B_2) \rho d M \nonumber\\
& \leq &\int_M \left( S | \nabla A |^2 + \frac{1}{3} C_1 \sigma^2 G + \frac{2}{3} C_1\sigma^{- 1} | \nabla A |^3 \right) \rho d M \nonumber\\
& \leq & \int_M S | \nabla A |^2 \rho d M +\frac{1}{3} C_1 \sigma^2 \int_M G \rho d M\nonumber\\
&  & + \frac{2}{3} C_1 \sigma^{- 1} \int_M F_{\lambda} | \nabla A | \rho dM + \frac{C_1}{24 \sigma \epsilon} \int_M | \nabla S |^2 \rho d M\nonumber\\
&  & + \frac{2}{3} C_1 \sigma^{- 1} \epsilon \int_M | \nabla^2 A |^2 \rho dM\nonumber\\
& = & \int_M S | \nabla A |^2 \rho d M + \frac{2}{3} C_1 \sigma^2 \int_M\left( B_1-2B_2 + \frac{1}{4} | \nabla S |^2 \right) \rho d M\nonumber\\
&  & + \frac{2}{3} C_1 \sigma^{- 1} \int_M F_{\lambda} | \nabla A | \rho dM + \frac{C_1}{24 \sigma \epsilon} \int_M | \nabla S |^2 \rho d M\nonumber\\
&  & + \frac{2}{3} C_1 \sigma^{- 1} \epsilon \int_M \left[ (S - 2) | \nabla A |^2 + 3(B_1-2B_2) + \frac{3}{2} | \nabla S |^2 - 3 \lambda C \right] \rho d M.
\end{eqnarray}
Hence
\begin{eqnarray}\label{2.4.19.a}
&&3 \theta \int_M (B_1-2B_2) \rho d M \nonumber\\
& \leq & \int_M \left[ S+\frac{2}{3} C_1 \sigma^{-1}\epsilon\left(S-2\right)\right] |\nabla A |^2 \rho dM\nonumber\\
&  & + \left( \frac{1}{6} C_1 \sigma^2 + \frac{C_1}{24 \sigma \epsilon} + C_1 \sigma^{- 1} \epsilon \right) \int_M | \nabla S |^2\rho d M\nonumber\\
&  & + \frac{2}{3} C_1 \sigma^{- 1} \int_M F_{\lambda} | \nabla A | \rho dM - 2 C_1 \sigma^{- 1} \epsilon \lambda \int_M C \rho d M,
\end{eqnarray}
where $\theta = 1 - \left( \frac{2}{9} C_1 \sigma^2 + \frac{2}{3} C_1 \sigma^{- 1}\epsilon \right)$.
When $\theta>0$, this together with (\ref{2.4.13}) implies
\begin{eqnarray}\label{2.4.20}
0 & \leq & \int_M [S+L_1(S-2)] | \nabla A |^2 \rho d M + L_2 \int_M |\nabla S |^2 \rho d M\nonumber\\
&  & + \frac{2}{3} C_1 \sigma^{- 1} \int_M F_{\lambda} | \nabla A | \rho dM - 2 \lambda (C_1 \sigma^{- 1} \epsilon + 3 \theta) \int_M C \rho d M,
\end{eqnarray}
where $L_1=\frac{2}{3}C_1\sigma^{-1}\epsilon+2\theta$, $L_2=\frac{1}{6}C_1\sigma^2+C_1\sigma^{-1}\epsilon+\frac{1}{24\epsilon}C_1\sigma^{-1}+\frac{9}{4}\theta$.
Substituting (\ref{2.4.17}), (\ref{2.4.18}) and (\ref{2.4.19}) into (\ref{2.4.20}), we obtain
\begin{eqnarray}\label{2.4.21}
0&\leq&\nonumber\int_M [S+L_1(S-2)]|\nabla A|^2\rho dM + 2L_2\int_M(-S+\beta_{\lambda}+\delta)|\nabla A|^2\rho dM\\
& &\nonumber+\frac{4}{3}C_1\sigma^{-1}(\beta_{\lambda}+\delta)\kappa\int_M|\nabla A|^2\rho dM
+\frac{C_1}{12\sigma\kappa}\int_M(S-\beta_{\lambda}+r_\lambda)|\nabla A|^2\rho dM\\
& &\nonumber+2|\lambda|(C_1\sigma^{-1}\epsilon+3\theta)\sqrt{\beta_{\lambda} + \delta}\int_M|\nabla A|^2\rho dM\\
&=&\nonumber \int_M \left[ \left(1+L_1-2L_2+\frac{C_1}{12\sigma\kappa}\right)(S-\beta_{\lambda})+(1+L_1)\beta_{\lambda}-2L_1+2L_2\delta \right.\\
& &\left.+ \frac{4}{3} C_1 \sigma^{-1} (\beta_{\lambda} + \delta) \kappa + \frac{C_1}{12\sigma\kappa} r_{\lambda}+2|\lambda|(C_1 \sigma^{- 1}\epsilon+3\theta)\sqrt{\beta_{\lambda}+\delta}\right]|\nabla A|^2\rho dM.
\end{eqnarray}
Denote by $\eta_\lambda=\frac{4}{3\sigma}C_1\tilde{r}_\lambda\kappa+(1+L_1)\tilde{r}_\lambda+\frac{C_1}{12\sigma\kappa} r_{\lambda}+2|\lambda|(C_1 \sigma^{- 1}\epsilon+3\theta)\sqrt{1+\tilde{r}_\lambda+\delta}$, $\tilde{r}_\lambda=\beta_{\lambda}-1$. Thus, (\ref{2.4.21}) is reduced to
\begin{eqnarray}\label{2.4.30}
0&\leq&\nonumber\left(1+L_1-2L_2+\frac{C_1}{12\sigma\kappa}\right)\int_M\left(S-\beta_{\lambda}\right)|\nabla A|^2\rho dM\\
& &+\left[1-L_1+\frac{4}{3\sigma}C_1\kappa+\left(\frac{4}{3\sigma}C_1\kappa+2L_2\right)\delta+\eta_\lambda\right]\int_M|\nabla A|^2\rho dM.
\end{eqnarray}
Let $\sigma=0.616, \epsilon=0.0577, \kappa=0.0434$. By a computation, we have
$$\theta > 0,\,\,\,1+L_1-2L_2+\frac{C_1}{12\sigma\kappa}<0,$$
$$1-L_1+\frac{4}{3\sigma}C_1\kappa<-0.452,\,\,\,2L_2+\frac{4}{3\sigma}C_1\kappa<8.03.$$
Take $\delta=1/18$. There exists an positive constant $\gamma$, such that $\eta_\lambda\leq0.005$ when $|\lambda|\leq\gamma$. Then the coefficients of the integral in (\ref{2.4.30}) are both negative. Therefore, $|\nabla A|\equiv0$. By a classification theorem due to Lawson \cite{Law}, $M$ must be $\mathbb{S}^k(r)\times\mathbb{R}^{n-k},\,\,1\leq k\leq n$. For $\lambda\neq0$,
the radius $r$ satisfies $\lambda=\frac{k}{r}-r$. Hence,
$$r=\frac{\sqrt{\lambda^2+4k}-\lambda}{2},$$
$$\mu_1=\ldots=\mu_k=\frac{1}{2k}(\sqrt{\lambda^2+4k}+\lambda),$$
where $\mu_k$is the $k$-th principal curvature of $M$.

We consider the following two cases:\\
(i) for $\lambda>0$, the squared norm of the second fundamental form $M$ satisfies
$$S_k=\sum\limits_{i=1}^{k}\mu_{i}^{2}=\frac{1}{2k}(\lambda^2+2k+|\lambda|\sqrt{\lambda^2+4k}).$$
Hence, $S_1=\beta_\lambda$. When $k\geq2$, $S_{k}<\beta_\lambda$.\\
(ii) for $\lambda<0$, by a computation, we have
$$S_{k}=\sum\limits_{i=1}^{k}\mu_{i}^{2}=\frac{1}{2k}(\lambda^2+2k-|\lambda|\sqrt{\lambda^2+4k}).$$
When $1\leq k\leq n$, $S_{k}<\beta_\lambda$.

Therefore, $\lambda>0$ and $M$ must be $\mathbb{S}\Big(\frac{\sqrt{\lambda^2+4}-|\lambda|}{2}\Big)\times\mathbb{R}^{n-1}$.
\end{proof}


\begin{thebibliography}{10}

\bibitem{AL} U. Abresch and J. Langer, The normalized curve shortening flow and homothetic solutions, \emph{J. Differential Geom.},
{\bf 23}(1986), 175-196.

\bibitem{Bre} S. Brendle, Embedded self-similar shrinkers of genus $0$, \emph{Ann. of Math.}, {\bf 183}(2016), 715-728.

\bibitem{CaoLi} H. D. Cao and H. Z. Li, A gap theorem for self-shrinkers of the mean curvature flow in arbitrary
codimension, \emph{Calc. Var. Partial Differential Equations}, {\bf
46}(2013), 879-889.

\bibitem{ChengNa} Q. M. Cheng and H. Nakagawa, Totally umbilic hypersurfaces, \emph{Hiroshima Math. J.}, {\bf 20}(1990), 1-10.

\bibitem{COW} Q. M. Cheng, S. Ogata and G. X. Wei, Rigidity theorems of $\lambda$-hypersurfaces, \emph{Comm. Anal. Geom.}, {\bf 24}(2016), 45-58.

\bibitem{ChengPeng} Q. M. Cheng and Y. J. Peng, Complete self-shrinkers of the mean curvature flow, \emph{Calc. Var. Partial Differential Equations}, {\bf 52}(2015), 497-506.

\bibitem{ChengWei1} Q. M. Cheng and G. X. Wei, Complete $\lambda$-hypersurfaces of weighted volume-preserving mean curvature flow. arXiv:1403.3177.

\bibitem{ChengWei2} Q. M. Cheng and G. X. Wei, A gap theorem of self-shrinkers, \emph{Trans. Amer. Math. Soc.},
{\bf 367}(2015), 4895-4915.

\bibitem{CDK} S. S. Chern, M. do Carmo and S. Kobayashi, Minimal submanifolds
of a sphere with second fundamental form of constant length,
\emph{Functional Analysis and Related Fields}, Springer-Verlag,
Berlin, 1970, 59-75.

\bibitem{CIMW} T. H. Colding, T. Ilmanen, W. P. Minicozzi II and B. White, The round sphere minimizes entropy among closed self-shrinkers,
\emph{J. Differential Geom.}, {\bf 95}(2013), 53-69.

\bibitem{CM} T. H. Colding and W. P. Minicozzi II, Generic mean curvature flow I; generic singularities,
\emph{Ann. of Math.}, {\bf 175}(2012), 755-833.

\bibitem{DX1} Q. Ding and Y. L. Xin, On Chern's problem for rigidity of minimal hypersurfaces in the spheres,
\emph{Adv. Math.}, {\bf 227}(2011), 131-145.

\bibitem{DX} Q. Ding and Y. L. Xin, The rigidity theorems of self-shrinkers, \emph{Trans. Amer. Math. Soc.}, {\bf 366}(2014), 5067-5085.

\bibitem{GXXZ} J. R. Gu, H. W. Xu, Z. Y. Xu and E. T. Zhao, A survey on rigidity problems in geometry and topology of submanifolds,
Proceedings of the 6th International Congress of Chinese
Mathematicians, \emph{Adv. Lect. Math.}, {\bf 37}, Higher Education
Press \& International Press, Beijing-Boston, 2016, 79-99.

\bibitem{Guang} Q. Guang, Gap and rigidity theorems of $\lambda$-hypersurfaces.
arXiv:1405.4871v2.

\bibitem{Hu1} G. Huisken, Flow by mean curvature of convex surfaces into spheres, \emph{J. Differential Geom.}, {\bf 22}(1984), 237-266.

\bibitem{Hu2} G. Huisken, Asymptotic behavior for singularities of the mean curvature flow, \emph{J. Differential Geom.}, {\bf 31}(1990), 285-299.

\bibitem{Hu3} G. Huisken, Local and global behaviour of hypersurfaces moving by mean curvature, Differential
geometry: partial differential equations on manifolds, \emph{Proc. Sympos. Pure Math.}, {\bf 54}, Amer. Math. Soc., 1993, 175-191.

\bibitem{Law} B. Lawson, Local rigidity theorems for minimal hypersurfaces,
\emph{Ann. of Math.}, {\bf 89}(1969), 187-197.

\bibitem{LeSe} N. Q. Le and N. Sesum, Blow-up rate of the mean curvature during the mean curvature flow
and a gap theorem for self-shrinkers, \emph{Comm. Anal. Geom.}, {\bf
19}(2011), 1-27.

\bibitem{LXX1} L. Lei, H. W. Xu and Z. Y. Xu, On Chern's conjecture for minimal hypersurfaces in
spheres, arXiv:1712.01175v1.

\bibitem{LXX2} L. Lei, H. W. Xu and Z. Y. Xu, The second pinching theorem on generalized Chern conjecture, in preparation.

\bibitem{LL} A. M. Li and J. M. Li, An intrinsic rigidity theorem for minimal submanifolds in a sphere,
\emph{Arch. Math. (Basel)}, {\bf 58}(1992), 582-594.

\bibitem{LiWei} H. Z. Li and Y. Wei, Classification and rigidity of self-shrinkers in the mean curvature flow, \emph{J. Math. Soc. Japan},
{\bf 66}(2014), 709-734.

\bibitem{McRo} M. McGonagle and J. Ross, The hyperplane is the only stable smooth solution to the isoperimetric
    problem in gaussian space, \emph{Geom Dedicata}, {\bf 178}(2015), 277-296.

\bibitem{PT1} C. K. Peng and C. L. Terng, Minimal hypersurfaces of sphere with constant scalar
curvature, \emph{Ann. of Math. Stud.}, {\bf 103}, Princeton Univ.
Press, Princeton, NJ, 1983, 177-198.

\bibitem{PT2} C. K. Peng and C. L. Terng, The scalar curvature of minimal hypersurfaces in
spheres, \emph{Math. Ann.}, {\bf 266}(1983), 105-113.

\bibitem{Simons} J. Simons, Minimal varieties in Riemannian manifolds, \emph{Ann. of Math.}, {\bf 88}(1968),
62-105.

\bibitem{SY} Y. J. Suh. and H. Y. Yang, The scalar curvature of minimal
hypersurfaces in a unit sphere, \emph{Comm. Contemp. Math.}, {\bf
9}(2007), 183-200.

\bibitem{WX} S. M. Wei and H. W. Xu, Scalar curvature of minimal hypersurfaces
in a sphere, \emph{Math. Res. Lett.}, {\bf 14}(2007), 423-432.

\bibitem{X0} H. W. Xu, Pinching theorems, global pinching theorems and eigenvalues for Riemannian
  submanifolds, Ph.D. dissertation, Fudan University, 1990.

\bibitem{X1} H. W. Xu, A rigidity theorem for submanifolds with parallel mean curvature in a sphere, \emph{Arch. Math.(Basel)}, {\bf 61}(1993),
489-496.

\bibitem{XLX} H. W. Xu, L. Lei and Z. Y. Xu, The second pinching theorem for complete $\lambda$-hypersurfaces(in Chinese), {Sci. Sin. Math.}, {\bf 48}(2018), 1-10.

\bibitem{XT} H. W. Xu and L. Tian, A new pinching theorem for closed
hypersurfaces with constant mean curvature in $S^{n+1}$, \emph{Asian
J. Math.}, {\bf 15}(2011), 611-630.

\bibitem{XX1} H. W. Xu and Z. Y. Xu, The second pinching theorem for hypersurfaces with constant mean curvature in a
sphere, \emph{Math. Ann.}, {\bf 356}(2013), 869-883.

\bibitem{XX2} H. W. Xu and Z. Y. Xu, A new characterization of the Clifford torus via scalar curvature pinching, \emph{J. Funct. Anal.}, {\bf 267}(2014), 3931-3962.

\bibitem{XX} H. W. Xu and Z. Y. Xu, On Chern's conjecture for minimal hypersurfaces and rigidity of self-shrinkers, \emph{J. Funct. Anal.}, {\bf 273}(2017), 3406-3425.

\bibitem{YC2} H. C. Yang and Q. M. Cheng, Chern's conjecture on minimal hypersurfaces,
\emph{Math. Z.}, {\bf 227}(1998), 377-390.

\bibitem{Zhang1} Q. Zhang, The pinching constant of minimal hypersurfaces in the unit
spheres, \emph{Proc. Amer. Math. Soc.}, {\bf 138}(2010), 1833-1841.

\end{thebibliography}
\end{document}